\newtheorem{coro}{Corollary}
\newtheorem{Example}{Example}
\newtheorem{defn}{Definition}
\newtheorem{prop}{Proposition}
\newtheorem{rem}{Remark}
\newtheorem{thm}{Theorem}
\newcommand{\vx}{\textbf{x}}
\newcommand{\vy}{\textbf{y}}
\newcommand{\vz}{\textbf{z}}
\newcommand{\sx}[1]{x^{(#1)}}
\newcommand\ssucc{\succ\mkern-4mu\succ}
\newcommand{\cone}{K}
\newcommand{\decomp}{g}
\newcommand{\flowO}[1]{\Phi_{#1}}
\newcommand{\flowE}[1]{\Psi_{#1}}
\newcommand{\fplus}{f^+}
\newcommand{\fminus}{f^-}
\def\beq{\begin{equation}}
\def\eeq{\end{equation}}
\title{\LARGE \bf
On sufficient conditions for mixed monotonicity
}
\author{Liren Yang \hspace{1cm}
Oscar Mickelin \hspace{1cm}
Necmiye Ozay% <-this % stops a space
\thanks{LY and NO are with the Dept. of
       Electrical Engineering and Computer Science,
       Univ. of Michigan, Ann Arbor, MI 48109
      {\tt\small yliren,necmiye@umich.edu}.  OM is with the Dept. of Mathematics, MIT, Cambridge, MA 02139 {\tt\small oscarmi@mit.edu}. This work builds on and extends an earlier technical report by LY and NO \cite{yang2017anote}.
      The work is supported in part by Ford Motor Co., NSF grants CNS-1446298 and ECCS-1553873, and DARPA grant N66001-14-1-4045. 
}
}
\begin{document}
\maketitle
\thispagestyle{empty}
\pagestyle{empty}

%%%%%%%%%%%%%%%%%%%%%%%%%%%%%%%%%%%%%%%%%%%%%%%%%%%%%%%%%%%%%%%%%%%%%%%%%%%%%%%%
\begin{abstract}
Mixed monotone systems form an important class of nonlinear systems that have recently received attention in the abstraction-based control design area. Slightly different definitions exist in the literature, and it remains a challenge to verify mixed monotonicity of a system in general. In this paper, we first clarify the relation between different existing definitions of mixed monotone systems, 
and then give two sufficient conditions for mixed monotone functions defined on Euclidean space. 
These sufficient conditions are more general than the ones from the existing control literature, and they suggest that mixed monotonicity is a very generic property.  
Some discussions are provided on the computational usefulness of the proposed sufficient conditions.
\end{abstract}

%%%%%%%%%%%%%%%%%%%%%%%%%%%%%%%%%%%%%%%%%%%%%%%%%%%%%%%%%%%%%%%%%%%%%%%%%%%%%%%%
\section{Introduction}
%!TEX root = main.tex

% sec_Intro

% - (mixed) monotone function
Mixed monotonicity is a property of a function that generalizes monotonicity. The latter one captures the property that the images of a function preserve the order of their pre-images, while the former one refers to the fact that a function can be decomposed into a monotonically increasing part and a monotonically decreasing part.
Apparently a monotone function is trivially a mixed monotone function with either the decreasing part or the increasing part being constant and zero.

% - mixed monotone system
In this paper, we study a special class of nonlinear dynamical systems called mixed monotone systems.
A notable property of such systems is that their flow maps are mixed monotone functions.
With this property, we can efficiently approximate the system's states at any time (and hence the trajectories) according to the system's initial states.
Previously, mixed monotonicity of a dynamical system has been used for qualitative system analysis, including analyzing global stability \cite{smith2008global}, \cite{chu1998mixed}, and studying convergence relation between the solutions to a parabolic system and its corresponding elliptic system \cite{lakshmikantham1998monotone}.
Recently, mixed monotone systems have attracted some attention in the area of abstraction-based controller synthesis \cite{coogan2015efficient}, \cite{yang2017fuel}.
In these works, the mixed monotonicity of a system is used for quantitative reachability analysis and abstraction computation. Moreover, unlike the earlier works focusing on qualitative analysis, these new works study mixed monotone systems defined on some compact region not necessarily invariant under the dynamics.

% - challenges
Despite the usefulness of mixed monotonicity in both qualitative and quantitative analysis, the definition of mixed monotone systems is not completely consistent in the literature. The authors notice that mixed monotone systems have two slightly different (but highly related) definitions in the literature \cite{coogan2016mixed}, \cite{coogan2015efficient}. Moreover, it remains unclear how to verify the mixed monotonicity of a function or a system in general. Instead, there are only some sufficient conditions \cite{chu1998mixed}, \cite{coogan2016mixed} available for checking mixed monotonicity. 
Aimed at solving these challenges, we present two main contributions of this paper. 
First, we clarify two different definitions of mixed monotone systems existing in the literature. 
Secondly, we give two sufficient conditions that can be used to verify mixed monotonicity of a function defined on  $n$-dimensional Euclidean space. 
These sufficient conditions are more general than the ones given in \cite{chu1998mixed}, \cite{coogan2016mixed}. 
By our first sufficient condition, all continuously differentiable functions with bounded partial derivatives  are mixed monotone.   
By our second sufficient condition, all functions of bounded variation are mixed monotone. 
These results suggest that mixed monotonicity holds for a large class of functions on Euclidean space.
The practical usefulness of this property, as a result, can be sometimes limited for quantitative computation. 
Hence we also provide some discussions, along the way of presenting the two sufficient conditions, with a focus on their computational usefulness.

\section{Preliminaries}\label{sec:Prelim}
%!TEX root = main.tex

% sec_Prelim

% contents:
% - def: proper cone
% - def: generalized inequality
% - remark: partial order well-defined
% - def: monotonicity
% - def: mixed monotonicity
% - remark: proper cone v.s. positive cone
% - thm: approx f value using MM
% - def: (mixed) monotonicity system
% - restrict to orthant-induced order

Let $\mathbb{R}^n$ be $n$-dimensional Euclidean space, and let $\overline{\mathbb{R}} := \mathbb{R}\cup\{-\infty, \infty\}$ be the extended real line. By convention, we use a boldface lower-case letter, e.g. $\vx$, to denote a vector from $\mathbb{R}^n$ (or any other general vector space).
Subscript $i$ in $\vx_i$ is used to distinguish different vectors, while normal font $x_i$ is used to denote the $i^{\rm th}$ component of a vector $\vx$.

Next, we give some definitions and preliminary results related to mixed monotone functions/systems.

% def: proper cone
\begin{defn}\label{def:ProperCone} (Proper Cone \cite{boyd2004convex})
\normalfont
Let $\mathcal{X}$ be a real vector space, a set $\cone \subseteq \mathcal{X}$ is a \textit{cone} if it is closed under non-negative scaling, i.e.,
\begin{align}
\vx \in \cone, a\geq0 \Rightarrow a\vx \in \cone.
\label{eq:defCone}
\end{align}
Furthermore, a cone $\cone$ is said to be \textit{proper} if it is:
\begin{enumerate}[nolistsep]
\item[1.] convex: $\vx_1,\vx_2\in \cone, a_1,a_2\geq 0 \Rightarrow a_1\vx_1 + a_2\vx_2 \in \cone$\footnote{Note that together with Eq. \eqref{eq:defCone}, this is the same as usual convexity of a set, where $a_1\in [0,1]$ and $a_2 = 1 - a_1$.};
\item[2.] pointed: $\vx\in \cone, a <0 \Rightarrow a\vx\notin \cone$;
\item[3.] closed: $\{\vx_n\}_{n=1}^{\infty} \subseteq \cone \text{ and } \lim_{n\rightarrow \infty} \vx_n = \vx$ implies $\vx\in \cone$;
\item[4.] solid: $\cone$ has nonempty interior.
\end{enumerate}
\end{defn}

% def: less\greater than or equal to
\begin{defn}\label{def:GeneralInequality} (Generalized Inequality)
\normalfont
A proper cone $\cone\subseteq \mathcal{X}$ defines a partial order on $\mathcal{X}$ in the following sense:
\begin{align}
\vx,\vy\in \mathcal{X}: \vx\succeq \vy \text{ iff } \vx-\vy \in \cone.
\label{eq:defGeq}
\end{align}
Similarly one can define $\preceq$.
\end{defn}

\begin{rem}\label{rem:OrderWellDefined}
\normalfont
The order $\succeq$ induced by a proper cone $\cone$ is indeed a partial order. First note that $\mathbf{0}\in \cone$ by letting $a = 0$ in Eq. \eqref{eq:defCone}, hence $\vx - \vx = \mathbf{0} \in \cone$, which means $\vx \succeq \vx$ and the induced order is reflexive. By convexity of $\cone$, the induced order is transitive, i.e., $\vx\succeq \vy$ and $\vy \succeq \vz$ implies that $\vx\succeq\vz$. By pointedness, the induced order is antisymmetric, i.e., $\vx \succeq \vy$ and $\vy \succeq \vx$ implies $\vx = \vy$. Moreover, if the cone $\cone$ is closed, the induced order $\succeq$ is preserved under limitation, and if $\cone$ is solid, then it allows us to define \textit{strict inequality} as $\vx \ssucc \vy \text{ iff }\vx - \vy\in \text{int}(\cone)$.
\end{rem}

% def: monotonicity (mapping)
\begin{defn}\label{def:Monotonicity} (Monotone Mapping)
\normalfont
Let $f: \mathcal{X}\rightarrow \mathcal{T}$ be a mapping, and let $\succeq_{\mathcal{X}}$ and $\succeq_{\mathcal{T}}$ be the partial orders induced by some cones defined on $\mathcal{X}$ and $\mathcal{T}$. The mapping $f$ is said to be \emph{monotone} if it is order preserving, that is,
\begin{align}\label{eq:OrderPreserve}
\vx, \vy \in \mathcal{X}, \vx\succeq_{\mathcal{X}}\vy \Rightarrow f(\vx)\succeq_{\mathcal{T}} f(\vy).
\end{align}
\end{defn}

% def: mixed monotonicity (mapping)
\begin{defn}\label{def:MixedMonotonicity} (Mixed Monotone Mapping)
\normalfont
A mapping $f: \mathcal{X}\rightarrow \mathcal{T}$ is \emph{mixed monotone} if there exists $\decomp: \mathcal{X}\times \mathcal{X}\rightarrow \mathcal{T}$ satisfying the following:
\begin{enumerate}[nolistsep]
\item[1.] $f$ is ``embedded'' on the diagonal of $\decomp$, i.e., $\decomp(\vx,\vx) = f(\vx)$;
\item[2.] $\decomp$ is monotone increasing in terms of the first argument, i.e., $\vx_1 \succeq_{\mathcal{X}} \vx_2, \Rightarrow \decomp(\vx_1,\vy) \succeq_{\mathcal{T}} \decomp(\vx_2,\vy)$;
\item[3.] $\decomp$ is monotone decreasing in terms of the second argument, i.e., $\vy_1 \succeq_{\mathcal{X}} \vy_2 \Rightarrow \decomp(\vx,\vy_1) \preceq_{\mathcal{T}} \decomp(\vx,\vy_2)$.
\end{enumerate}
A function $\decomp$ satisfying the above conditions is called a \emph{decomposition function} of $f$.
\end{defn}

Usually, monotonicity and mixed monotonicity are defined in terms of the so called \textit{positive cone} \cite{coogan2015efficient}, \cite{angeli2003monotone}, definition of which is very similar to that of a proper cone,  except that a positive cone is not required to be closed or solid. The results that are to be presented hold for mixed monotone systems defined by a positive cone. In many important applications, however, the cones used to define the orders also turn out to be proper.

It should also be noticed that decomposition function may not be unique. To see this, consider a simple example where $f(x) = 1$. Clearly both $g(x,y)=1$ and $g(x,y) = x/y$ are decomposition functions of $f$. 
As will be discussed later in Proposition \ref{prop:OptByMM}, a decomposition function $g$ can be used to approximate the function value of $f$. We are interested in finding a $g$ that gives a tight approximation.

The following theorem allows us to approximate the values of a mixed monotone function in some region, using its decomposition function.
% Thm: Approximation of function values using mixed monotonicity
\begin{prop}\label{prop:OptByMM}
\normalfont
(Theorem 1 in \cite{coogan2015efficient})
Let $f: \mathcal{X}\rightarrow \mathcal{T}$ be a mapping, $\succeq_{\mathcal{X}}$ and $\succeq_{\mathcal{T}}$ be the partial orders induced by some cones defined on $\mathcal{X}$ and $\mathcal{T}$, and $X = \{\vx\in\mathcal{X}\mid \underline{\vx} \preceq_{\mathcal{X}} \vx\preceq_{\mathcal{X}} \overline{\vx}\}$. Assume $f$ is mixed monotone with decomposition function $\decomp: \mathcal{X}\times \mathcal{X}  \rightarrow \mathcal{T}$, then
\begin{align}
\decomp(\underline{\vx},\overline{\vx}) \preceq_{\mathcal{T}} f(\vx) \preceq_{\mathcal{T}} \decomp(\overline{\vx}, \underline{\vx}), \forall \vx\in X.
\label{eq:OptByMM}
\end{align}
\end{prop}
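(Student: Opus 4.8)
The plan is to fix an arbitrary $\vx \in X$ and to establish the two bounds in Eq. \eqref{eq:OptByMM} separately. In each case I start from the diagonal value $\decomp(\vx,\vx) = f(\vx)$, guaranteed by property 1 of Definition \ref{def:MixedMonotonicity}, and then move to the relevant corner value by changing one argument of $\decomp$ at a time and invoking properties 2 and 3. The only structural fact needed beyond those properties is that $\succeq_{\mathcal{T}}$ is a genuine partial order, in particular transitive; this is supplied by Remark \ref{rem:OrderWellDefined}, where transitivity follows from convexity of the defining cone.

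For the upper bound I would use the hypothesis $\underline{\vx} \preceq_{\mathcal{X}} \vx \preceq_{\mathcal{X}} \overline{\vx}$ as follows. Applying property 2 (monotonicity in the first argument) to the relation $\overline{\vx} \succeq_{\mathcal{X}} \vx$, with the second argument held at $\vx$, gives $\decomp(\overline{\vx}, \vx) \succeq_{\mathcal{T}} \decomp(\vx,\vx) = f(\vx)$. Applying property 3 (antitonicity in the second argument) to $\vx \succeq_{\mathcal{X}} \underline{\vx}$, with the first argument held at $\overline{\vx}$, gives $\decomp(\overline{\vx}, \underline{\vx}) \succeq_{\mathcal{T}} \decomp(\overline{\vx}, \vx)$. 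Chaining these two inequalities by transitivity yields $f(\vx) \preceq_{\mathcal{T}} \decomp(\overline{\vx}, \underline{\vx})$, which is the right-hand inequality.

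The lower bound is entirely symmetric. Starting again from $f(\vx) = \decomp(\vx,\vx)$, property 3 applied to $\overline{\vx} \succeq_{\mathcal{X}} \vx$ (first argument held at $\vx$) gives $\decomp(\vx,\vx) \succeq_{\mathcal{T}} \decomp(\vx, \overline{\vx})$, and property 2 applied to $\vx \succeq_{\mathcal{X}} \underline{\vx}$ (second argument held at $\overline{\vx}$) gives $\decomp(\vx, \overline{\vx}) \succeq_{\mathcal{T}} \decomp(\underline{\vx}, \overline{\vx})$. Transitivity then delivers $\decomp(\underline{\vx}, \overline{\vx}) \preceq_{\mathcal{T}} f(\vx)$, completing the chain $\decomp(\underline{\vx},\overline{\vx}) \preceq_{\mathcal{T}} f(\vx) \preceq_{\mathcal{T}} \decomp(\overline{\vx},\underline{\vx})$.

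There is no genuine analytical obstacle here; the argument is a direct bookkeeping of the two monotonicity conditions, and it is worth noting that solidity and closedness of the cone play no role, so the statement holds for the more general positive-cone setting mentioned after Definition \ref{def:MixedMonotonicity}. The one point requiring care — and the easiest place to commit a sign error — is tracking which corner, $\underline{\vx}$ or $\overline{\vx}$, belongs in which slot of $\decomp$: to obtain the upper bound the increasing argument must be pushed to its largest value $\overline{\vx}$ while the decreasing argument is pushed to its smallest value $\underline{\vx}$, and the roles reverse for the lower bound. Once this is kept straight, everything follows immediately from transitivity of $\succeq_{\mathcal{T}}$.
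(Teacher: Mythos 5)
Your proof is correct: the two chains of inequalities follow exactly from properties 1--3 of Definition \ref{def:MixedMonotonicity} together with transitivity of the order, and the corner points are placed in the right slots of $\decomp$. Note that the paper itself does not prove this statement --- it is quoted as Theorem 1 of \cite{coogan2015efficient} --- and your argument is precisely the standard one used there, so there is nothing to reconcile; your side remark that closedness and solidity of the cone are never used (so the result holds in the positive-cone setting) is also accurate and consistent with the paper's comment following Definition \ref{def:MixedMonotonicity}.
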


% def: (mixed) monotone system
\begin{defn}\label{def:MixedMonotoneSystem} (Mixed Monotone System)
\normalfont
For simplicity, consider an autonomous system governed by a differential equation $\dot{\vx} = f(\vx)$, where $\vx\in X\subseteq \mathbb{R}^n$ is the state. Let $\flowO{t}: X \rightarrow X$ be the flow that maps the initial state at time instant $0$ to final state at time instant $t$. The system is called mixed monotone if its flow map $\flowO{t}$ is mixed monotone (i.e., satisfying Definition \ref{def:MixedMonotonicity}) for all $t$ such that $\flowO{t}$ is defined.
\end{defn}

To this point, we have all the definitions needed in this paper regarding mixed monotone functions and systems. Note that these concepts are defined for general ordered real vector spaces. In many cases, however, the space we consider is $\mathbb{R}^n$ and the partial order is induced by an orthant in $\mathbb{R}^n$. In particular, if the orthant is the positive orthant, then the induced order is simply element-wise $\leq$ in $\mathbb{R}^n$. In what follows, we only consider (mixed) monotone functions/systems in $\mathbb{R}^n$ with respect to orthant-induced orders.

\section{Main Results}\label{sec:Result}
In this section, we present the main results in this paper. 
%We first give a infinitesimal characterization of mixed monotone system, 
We first clarify the relation between two different definitions of mixed monotone systems in the literature, and
then give some sufficient conditions for a function to be mixed monotone.
%!TEX root = main.tex

% sec_Result_InfCharac

% contents:
% - motivation
% - thm: vector field characterization
% - discussion

\subsection{On the Relation Between Two Different Definitions of Mixed Monotone Systems}

This section tries to clarify the relation between mixed monotone systems (as defined in Section \ref{sec:Prelim}) and systems with mixed monotone vector fields.
Note that the two types of systems are different conceptually: the former ones are defined to have mixed monotone flow map, while the latter ones have mixed monotone vector field.
The authors notice that both type of systems are called mixed monotone in the literature \cite{coogan2016mixed}, \cite{coogan2015efficient}, \cite{chu1998mixed}.
On the other hand, however, there is a nice result in \cite{angeli2003monotone} showing monotonicity of the vector field implies that of the flow map. Therefore, an analogous question to ask is: for a given system $\dot{\vx} = f(\vx)$, does the fact that the vector field $f$ is mixed monotone also imply the flow map $\flowO{t}$ to be mixed monotone?

To answer this question, we have the following result.
\begin{thm}\label{thm:InfCharac}
Given system $\dot{\vx} = f(\vx)$, where state $\vx \in X\subseteq \mathcal{X} = \mathbb{R}^n $ and vector field $f$ is defined on some open set $\widetilde{X}$ containing set $X$, assume that $f$ is locally Lipschitz on $\widetilde{X}$ and is mixed monotone, the system is forward complete, and the domain $X$ is positively invariant under the considered dynamics. Then, the flow map $\flowO{t}$ is mixed monotone.
\end{thm}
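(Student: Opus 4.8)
The plan is to lift the dynamics to the doubled state space $\mathbb{R}^{2n}$, build there a system whose flow is \emph{monotone} for a suitable order, and then read off a decomposition function for $\flowO{t}$ from one half of that flow. Concretely, let $\decomp$ be a decomposition function of the vector field $f$ (which exists since $f$ is mixed monotone) and consider the \emph{embedding system}
\begin{align}\label{eq:embed}
\dot{\vx} = \decomp(\vx,\vy), \qquad \dot{\vy} = \decomp(\vy,\vx),
\end{align}
on $\mathbb{R}^n\times\mathbb{R}^n$, whose flow I denote by $\flowE{t}=(\flowE{t}^1,\flowE{t}^2)$ with $\flowE{t}^1,\flowE{t}^2\in\mathbb{R}^n$. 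Since $\decomp(\vx,\vx)=f(\vx)$, the diagonal $\{\vx=\vy\}$ is invariant and $\flowE{t}(\vx,\vx)=(\flowO{t}(\vx),\flowO{t}(\vx))$, so \eqref{eq:embed} genuinely contains the original dynamics.

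First I would establish that \eqref{eq:embed} is monotone with respect to the ``south-east'' order on $\mathbb{R}^{2n}$ induced by the proper cone $\cone\times(-\cone)$, under which $(\vx_1,\vy_1)$ dominates $(\vx_2,\vy_2)$ exactly when $\vx_1\succeq\vx_2$ and $\vy_1\preceq\vy_2$. After the change of variables $\vy\mapsto-\vy$ this order becomes the standard orthant order, and the key step is to verify the Kamke (quasimonotonicity) condition: each component of the transformed vector field is nondecreasing in every \emph{other} coordinate. This is exactly where the two defining properties of $\decomp$ are consumed — monotone increase in the first argument produces the nonnegative within-block cross terms, while monotone decrease in the second argument, together with the sign flip, produces the nonnegative cross-block terms. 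Invoking the standard monotone-flow theorem for cooperative systems (M\"uller--Kamke), which is the flow-level analogue of the vector-field result already cited in the introduction, then gives that $\flowE{t}$ preserves the south-east order.

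Next I would define the candidate decomposition function of the flow as $G_t(\vx,\vy):=\flowE{t}^1(\vx,\vy)$ and check the three conditions of Definition \ref{def:MixedMonotonicity}. Condition~1 is the diagonal identity $G_t(\vx,\vx)=\flowO{t}(\vx)$ noted above. For conditions~2 and~3, if $\vx_1\succeq\vx_2$ then $(\vx_1,\vy)$ dominates $(\vx_2,\vy)$ in the south-east order, and if $\vy_1\succeq\vy_2$ then $(\vx,\vy_2)$ dominates $(\vx,\vy_1)$; feeding these into south-east monotonicity of $\flowE{t}$ and projecting onto the first block yields $G_t(\vx_1,\vy)\succeq G_t(\vx_2,\vy)$ and $G_t(\vx,\vy_1)\preceq G_t(\vx,\vy_2)$, respectively. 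Hence $G_t$ is a decomposition function and $\flowO{t}$ is mixed monotone.

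The order-theoretic bookkeeping above is routine; the main obstacle is the well-posedness of $\flowE{t}$ on the domain where $G_t$ must be defined. Two issues need care. First, \eqref{eq:embed} is built from $\decomp$, which is only assumed to exist; to make $\flowE{t}$ a genuine flow I would either select a decomposition function that is at least continuous (locally Lipschitz if uniqueness is wanted) or, to rely on the weakest hypotheses, replace the explicit flow by M\"uller's differential-inequality version of the comparison theorem, which needs only continuity together with the Kamke condition. Second, I must guarantee that $\flowE{t}(\vx,\vy)$ exists for every $t$ at which $\flowO{t}$ is defined and every relevant $(\vx,\vy)\in X\times X$; this is precisely where forward completeness and positive invariance of $X$ enter, since diagonal trajectories remain in $X$ for all forward time and south-east monotonicity traps the pertinent off-diagonal trajectories against diagonal ones. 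Making this trapping argument airtight — in particular for ``upper'' initial data such as $(\overline{\vx},\underline{\vx})$, which lie above but not below a single diagonal point — is the delicate part of the proof and the place where the regularity and invariance assumptions must be used in full.
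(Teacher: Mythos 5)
Your proposal follows essentially the same route as the paper's own proof: it constructs the identical embedding system $\dot{\vx}=\decomp(\vx,\vy)$, $\dot{\vy}=\decomp(\vy,\vx)$, establishes monotonicity of its vector field under the order induced by $\cone\times(-\cone)$, invokes the infinitesimal characterization of monotone flows (the paper cites \cite{angeli2003monotone} where you cite M\"uller--Kamke, which is the same device), and takes the projection of $\flowE{t}$ onto the first block as the decomposition function of $\flowO{t}$. The well-posedness and regularity caveats you raise at the end are in fact glossed over in the paper's proof, so your treatment is, if anything, slightly more careful on those points.
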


\begin{proof}
Denote the system by $\Sigma: \dot{\vx} = f(\vx)$. Let $f$ be a mixed monotone map and $\decomp$ be a decomposition function for $f$. We prove Theorem \ref{thm:InfCharac} by constructing a decomposition function for $\flowO{t}$ using $\decomp$.

We start by the standard trick of constructing an ``embedding system" \cite{gouze1994monotone}, i.e., consider the following system
\begin{align}
\Sigma^{\rm E}: \begin{cases}
\dot{\vx} = \decomp(\vx,\vy) \\
\dot{\vy} = \decomp(\vy,\vx)
\end{cases}
\label{eq:EmbedSys}
\end{align}
where $\decomp$ is a decomposition function of $f$. For system \eqref{eq:EmbedSys}, one can make the following observations:

\begin{itemize}
\item[(i)] the embedding system has monotone vector field, under the following order defined on $\mathcal{X}\times \mathcal{X}$:
\begin{align}
(\vx_1,\vy_1) \succeq_{\mathcal{X}\times\mathcal{X}} (\vx_2,\vy_2) \text{ iff } \vx_1 \succeq_{\mathcal{X}} \vx_2\text{ and } \vy_1 \preceq_{\mathcal{X}} \vy_2;
\label{eq:d1}
\end{align}
where $\succeq_{\mathcal{X}}$ is the element-wise order on $\mathcal{X} = \mathbb{R}^n$.
\item[(ii)] diagonal $D := \{(\vx,\vy)\in \mathcal{X}\times\mathcal{X}\mid \vx = \vy\}$ is invariant under $\flowO{t}$, the flow map of $\Sigma$;
\item[(iii)] when state $(\vx,\vy)$ stays on the diagonal $D$, we have $\dot{\vx} = f(\vx) = \dot{\vy} = f(\vy)$.
\end{itemize}
In other words, the dynamics of the system $\Sigma$ is ``embedded'' on the diagonal of that of $\Sigma^{\rm E}$.

Notice that $\Sigma^{\rm E}$ has monotone vector field (observation (i)), and its flow map is well defined on set $X\times X$ under the technical condition in the statement of Theorem  \ref{thm:InfCharac}. 
By the infinitesimal characterization of monotone systems given by \cite{angeli2003monotone}, 
 $\Sigma^{\rm E}$ has monotone flow $\flowE{t}$ under the same order in the state space $\mathcal{X}\times \mathcal{X}$, i.e.,
\begin{align}
(\vx_1,\vy_1) \succeq_{\mathcal{X}\times\mathcal{X}} (\vx_2,\vy_2) \Rightarrow & \flowE{t}(\vx_1,\vy_1) \succeq_{\mathcal{X}\times\mathcal{X}} \flowE{t}(\vx_2,\vy_2) \nonumber \\
\Rightarrow & \flowE{t}|_{\mathcal{X}}(\vx_1,\vy_1) \succeq_{\mathcal{X}} \flowE{t}|_{\mathcal{X}}(\vx_2,\vy_2),
\label{eq:d2}
\end{align}
where $\flowE{t}|_{\mathcal{X}}(\vx_1,\vy_1) $ is the projection of $\flowE{t}(\vx_1,\vy_1)$ onto $\vx$ coordinates.
Moreover, by observations (ii) and (iii), we have
\begin{align}
\flowO{t}(\vx) = \flowE{t}|_{\mathcal{X}}(\vx,\vx)
\label{eq:d3}
\end{align}
Now combining \eqref{eq:d1}, \eqref{eq:d2} and \eqref{eq:d3} leads to the fact that $\flowE{t}|_{\mathcal{X}}$ is a decomposition function of $\flowO{t}$. Hence $\flowO{t}$ is a mixed monotone map and $\Sigma$ is a mixed monotone system by definition.
\end{proof}

A few remarks are in order. The usefulness of Theorem \ref{thm:InfCharac} lies in that one can obtain a decomposition function of the vector field for a time discretization of a system from that of the associated continuous-time system.
To be specific, given a system $\dot{\vx} = f(\vx)$ satisfying the hypotheses in Theorem \ref{thm:InfCharac}, let $g$ be a decomposition function for $f$. The discrete-time system with sampling time $\Delta $ is simply governed by difference equation $\vx^{+} = \flowO{t}(\vx)$, where $\flowO{t}$ is the flow map; and $t = n\Delta$ are the sampling time instants. If one can somehow find $\flowE{t}$, the flow map of the embedding system, one automatically obtains a decomposition function for $\flowO{t}$, which is the right-hand-side of the time-discretized system equation.

In many control applications, system modeling is done in continuous-time, with the system equation derived by some governing physical principles, while there are controller design techniques developed for discrete-time models. In such cases, Theorem \ref{thm:InfCharac} can be used to leverage mixed monotonicity in the design procedure.

%!TEX root = main.tex

% sec_Result_SuffCond

% contents:
% - thm: new sufficient condition
% - discussion

\subsection{A New Sufficient Condition for Mixed Monotonicity}
In this part we give a sufficient condition for a function to be mixed monotone. Particularly, we prove its sufficiency by constructing a decomposition function.
% Thm: Constructing decomposition function
\begin{thm}\label{thm:ConstrucDF}
\normalfont
% (Theorem 1 in \cite{yang2017fuel})
Assume $f: \mathbb{R}^n \rightarrow \mathbb{R}^m$ is differentiable, and
\begin{align}
\frac{\partial f_i}{\partial x_j}(\vx) \in (a_{ij},b_{ij}), \forall \vx\in X\subseteq \mathbb{R}^n,
\end{align}
where $a_{ij}, b_{ij}\in \overline{\mathbb{R}}$, satisfying $a_{ij}< b_{ij}$ but $(a_{ij} ,b_{ij}) \neq (-\infty,\infty)$. 
The function $f$ is mixed monotone on $X$, under element-wise order $\leq$ on  $\mathbb{R}^n$ and $\mathbb{R}^m$.
\end{thm}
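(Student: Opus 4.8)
The plan is to prove mixed monotonicity by exhibiting a decomposition function $\decomp$ explicitly and checking the three requirements of Definition \ref{def:MixedMonotonicity} directly. The guiding idea is that, although each partial derivative $\partial f_i/\partial x_j$ need not keep a fixed sign, the hypothesis $(a_{ij},b_{ij}) \neq (-\infty,\infty)$ guarantees that at least one of the two bounds is finite. I would therefore, for every pair $(i,j)$, single out a finite endpoint $c_{ij}$: take $c_{ij}=a_{ij}$ whenever $a_{ij}$ is finite and record $j\in S_i^+$, and otherwise take $c_{ij}=b_{ij}$ and record $j\in S_i^-$. Subtracting the linear term $c_{ij}x_j$ then tilts the $j$-th partial of the $i$-th component to a definite sign on $X$, since $\partial f_i/\partial x_j - a_{ij}>0$ for $j\in S_i^+$ and $\partial f_i/\partial x_j - b_{ij}<0$ for $j\in S_i^-$.

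Next I would build $\decomp$ componentwise by writing $f_i = h_i + \ell_i$, where $\ell_i(\vx)=\sum_j c_{ij}x_j$ is linear and $h_i(\vx)=f_i(\vx)-\ell_i(\vx)$ is monotone in each coordinate (increasing for $j\in S_i^+$, decreasing for $j\in S_i^-$). Each summand admits an obvious decomposition: for $h_i$ I route the first argument into the increasing coordinates and the second into the decreasing ones, while for the linear $\ell_i$ I use the sign split $\max\{c_{ij},0\}x_j+\min\{c_{ij},0\}y_j$. Concretely I would set
\[
\decomp_i(\vx,\vy)=f_i(\mathbf{w})-\sum_j c_{ij}w_j+\sum_j\big(\max\{c_{ij},0\}x_j+\min\{c_{ij},0\}y_j\big),
\]
where $w_j=x_j$ for $j\in S_i^+$ and $w_j=y_j$ for $j\in S_i^-$, and then stack the components $i=1,\dots,m$.

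Finally I would verify the three defining properties. On the diagonal $\vx=\vy$ one has $\mathbf{w}=\vx$ and the two linear sums collapse to $\sum_j c_{ij}x_j$, so $\decomp_i(\vx,\vx)=f_i(\vx)$. Differentiating in $x_k$ and $y_k$ and combining the tilted sign of $\partial f_i/\partial x_j$ with the elementary bounds $\max\{\cdot,0\}\geq 0$ and $\min\{\cdot,0\}\leq 0$ yields $\partial\decomp_i/\partial x_k\geq 0$ and $\partial\decomp_i/\partial y_k\leq 0$ for every $k$. The step needing the most care is passing from these sign conditions on the partials to genuine monotonicity of $\decomp_i$ in each argument: this is legitimate only if $X$ is order-convex (for instance a box $\{\underline{\vx}\preceq\vx\preceq\overline{\vx}\}$), so that single-coordinate segments remain in $X$ and the mixed point $\mathbf{w}$ stays in the domain of $f$; I would state this domain assumption explicitly. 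The other error-prone point is the bookkeeping of the case split, namely which endpoint is finite and the induced membership in $S_i^+$ or $S_i^-$, which I would handle symmetrically as above.
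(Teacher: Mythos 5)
Your construction is correct and is essentially the paper's own proof: both route each coordinate $j$ into $f_i$ according to the dominant behavior of $\partial f_i/\partial x_j$ and add a nonnegative linear multiple of $(x_j - y_j)$ to make the partials of $\decomp_i$ sign-definite --- indeed, your $\max\{c_{ij},0\}x_j+\min\{c_{ij},0\}y_j - c_{ij}w_j$ terms collapse algebraically to exactly the paper's form $\decomp_i(\vx,\vy)=f_i(\vz)+(\boldsymbol{\alpha}_i-\boldsymbol{\beta}_i)^T(\vx-\vy)$. The only differences are minor: the paper picks the tilting coefficient by comparing $|a_{ij}|$ with $|b_{ij}|$ (so it always uses the smaller magnitude, giving tighter bounds in Proposition \ref{prop:OptByMM}, whereas your finiteness-based rule may pick the larger endpoint when both are finite), it adds an unnecessary $\epsilon$ where you correctly exploit openness of $(a_{ij},b_{ij})$, and your explicit caveat that $X$ must be closed under coordinate mixing (e.g., a box) for the sign-of-partials argument to yield monotonicity is a legitimate point that the paper leaves implicit.
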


\begin{proof}
We prove Theorem \ref{thm:ConstrucDF} by constructing a decomposition function for $f$, then $f$ is mixed monotone by definition.

By assumption $\frac{\partial f_i}{\partial x_j}(\vx) \in (a_{ij},b_{ij})$ for all $\vx\in X$, the interval $(a_{ij},b_{ij})$ must satisfy at least one of the following four cases:

\renewcommand\tabcolsep{3pt}
\begin{tabular}{llll}
case 1: &sign-stable   &positive & $a_{ij}\geq 0$\\
case 2: &sign-unstable &``positive'' & $a_{ij}\leq 0$, $b_{ij}\geq 0$, \\
& & & $|a_{ij}|\leq |b_{ij}|$\\
case 3: &sign-unstable &``negative'' & $a_{ij}\leq 0$,  $b_{ij}\geq 0$, \\
& & & $|a_{ij}|\geq |b_{ij}|$\\
case 4: &sign-stable   &negative & $b_{ij}\leq 0$.\\
\end{tabular}

\noindent According to the above cases, define $\decomp: \mathbb{R}^n\times \mathbb{R}^n  \rightarrow \mathbb{R}^m$ as
\begin{align}
\forall i &\in 1\dots,m: \nonumber \\
\decomp_i(\vx,\vy) &= f_i(\vz) + (\boldsymbol{\alpha}_i - \boldsymbol{\beta}_i)^T(\vx - \vy),
\label{eq:decomp_form}
\end{align}
where $\vz = [z_1,\dots, z_n]^T$, $\boldsymbol{\alpha}_i = [\alpha_{i1}, \dots, \alpha_{in}]^T$, $\boldsymbol{\beta}_i = [\beta_{i1}, \dots, \beta_{in}]^T$ are $n$-vectors defined as follows
\begin{align}
z_j = &
\begin{cases}
x_j \text{ case 1, 2}\\
y_j \text{ case 3, 4}\\
\end{cases},
\label{eq:z}\\
\alpha_{ij} = &
\begin{cases}
0 &\text{ case 1, 3, 4}\\
|a_{ij}| + \epsilon &\text{ case 2}\\
\end{cases},
\label{eq:alpha}\\
\beta_{ij} = &
\begin{cases}
0 & \text{ case 1, 2, 4}\\
-|b_{ij}| - \epsilon & \text{ case 3}\\
\end{cases},
\label{eq:beta}
\end{align}
where $\epsilon$ is a small positive number.

Next we show that $\decomp$ is a decomposition function of $f$.
\begin{enumerate}[nolistsep]
\item[1.] Obviously $\decomp(\vx,\vx) = f(\vx)$ by equations \eqref{eq:decomp_form} and \eqref{eq:z}.
\item[2.] $\vx_1 \geq \vx_2 \Rightarrow \decomp(\vx_1,\vy)\geq \decomp(\vx_2,\vy)$ because
\begin{align}
\forall i: \frac{\partial \decomp_i}{\partial x_j} = &\sum_{k = 1}^n\frac{\partial f_i}{\partial z_k}\frac{\partial z_k}{\partial x_j} + (\alpha_{ij} - \beta_{ij}) \nonumber\\
= &\frac{\partial f_i}{\partial z_j}\frac{\partial z_j}{\partial x_j} + (\alpha_{ij} - \beta_{ij}) \nonumber\\
= & \begin{cases}
\frac{\partial f_i}{\partial x_j}& \text{ case 1} \\
\frac{\partial f_i}{\partial x_j} + |a_{ij}| + \epsilon & \text{ case 2} \\
|b_{ij}| + \epsilon& \text{ case 3} \\
0& \text{ case 4} \\
\end{cases}
\nonumber \\
\geq & 0.
\end{align}
\item[3.] $\vy_1 \geq \vy_2 \Rightarrow \decomp(\vx,\vy_1)\leq \decomp(\vx,\vy_2)$ because
\begin{align}
\forall i: \frac{\partial \decomp_i}{\partial y_j} = &\sum_{k = 1}^n\frac{\partial f_i}{\partial z_k}\frac{\partial z_k}{\partial y_j} - (\alpha_{ij} - \beta_{ij}) \nonumber\\
= &\frac{\partial f_i}{\partial z_j}\frac{\partial z_j}{\partial y_j} - (\alpha_{ij} - \beta_{ij}) \nonumber \\
= &
\begin{cases}
0& \text{ case 1} \\
-|a_{ij}| - \epsilon & \text{ case 2} \\
\frac{\partial f_i}{\partial y_j} - |b_{ij}| - \epsilon& \text{ case 3} \\
\frac{\partial f_i}{\partial y_j}& \text{ case 4} \\
\end{cases}
\nonumber \\
\leq & 0.
\end{align}
\end{enumerate}
It follows from definition \ref{def:MixedMonotonicity} that $\decomp$ is a decomposition function of $f$ and hence Theorem \ref{thm:ConstrucDF} is proved.
\end{proof}

%\begin{rem}\label{rem:THM2}
%\normalfont
We now discuss some implications of this result. By Theorem \ref{thm:ConstrucDF}, all differentiable functions with continuous partial derivatives are mixed monotone on a compact set, because the partial derivatives are bounded on the compact set, and hence satisfy the hypothesis of Theorem \ref{thm:ConstrucDF}.
%\end{rem}

% \begin{rem}\label{rem:ApprxConserv}
% \normalfont
Theorem \ref{thm:ConstrucDF} is a natural extension of the result in \cite{coogan2015efficient}, which only handles the case with sign-stable partial derivatives. The idea here is to use linear terms to create additional offset to overcome the sign-unstable partial derivatives, which leads to a decomposition. 
These linear terms are chosen to be as small as possible so that the decomposition function constructed by Theorem \ref{thm:ConstrucDF} gives a tighter approximation when applying Proposition \ref{prop:OptByMM}\footnote{The proof of Theorem \ref{thm:ConstrucDF} would still go through if we combine case 2 and case 3, but we can get smaller coefficients in front of the linear term by treating these two cases separately.}.   
In the case where all the partial derivatives $\frac{\partial f_i}{\partial x_j}$ are sign-stable, the decomposition function constructed here gives a tight approximation in Proposition \ref{prop:OptByMM}, that is, the inequality in equation \eqref{eq:OptByMM} reduces to equality at some $x\in X$ \cite{coogan2015efficient}. However this is not true when there are sign-unstable partial derivatives. Thus in general the approximation given by Proposition \ref{prop:OptByMM} might be conservative when using the decomposition function constructed in Theorem \ref{thm:ConstrucDF}.
However, one can reduce such conservatism by dividing region $X$ into smaller subregions and applying the same approximation on each subregion. Then the extremum function value over region $X$ can be obtained by combining the extremum function values on those subregions. This divide-and-conquer approach, of course, requires more computational effort because one needs to approximate the ranges of sign-unstable partial derivatives on each subregion.

Note that the construction of the decomposition function requires to approximate the ranges of the sign-unstable partial derivatives.
Therefore, Theorem \ref{thm:ConstrucDF} together with Proposition \ref{prop:OptByMM} ``shift'' the difficulty of approximating the function value of $f$ into approximating its partial derivatives $\tfrac{\partial f_i}{\partial x_j}$. By doing such, the difficulty may not be reduced in general. However, in many control applications, the considered systems including thermal systems \cite{yang2017fuel} and traffic networks \cite{coogan2016mixed}, are naturally (mixed) monotone.
If one can approximate the partial derivatives of system flow once and for all and prove its (mixed) monotonicity, such properties can be used to simplify the system analysis and design techniques.

%!TEX root = main.tex

% sec_Result_SuffCond: bounded variation

% contents:
% - defn: total variation, bounded variation
% - thm: jodan decomposition (one sided statement)
% - coro: mixed monotone 
% - rmks: 
%    > (R^n)
%    > mention the other direction of the statement, but point out the limitation on the form restriction
% -discussion: usefulness 
\subsection{A More General Sufficient Condition for Mixed Monotonicity}
In this part, we discuss the relation of mixed monotone functions with functions of bounded variation. This relation leads to a more general sufficient condition for mixed monotonicity. For the mixed monotone functions satisfying this condition, however, results like Proposition \ref{prop:OptByMM} may have limited practical use due to the conservatism. 
In what follows, we will consider univariate scalar-valued function for simplicity of the presentation. 

\begin{defn}\label{def:BV} (Bounded Variation) A real scalar function $f:[\underline{x},\overline{x}]\rightarrow \mathbb{R}$ is of \textit{bounded variation} if 
\begin{align}
\sup_{ P \in\mathcal{P}_{[\underline{x},\overline{x}]}} \sum_{i=0}^{|P|-1} \left\vert f\big(x^{(i+1)}\big)-f\big(x^{(i)}\big)\right\vert< + \infty,
\label{eq:TV}
\end{align}
where
\begin{enumerate}[nolistsep]
\item[1.] $P = \{\sx{1}, \sx{2},\dots, \sx{N}\}$, with $\underline{x}\ = \sx{0} < \sx{1} <\dots,<\sx{N-1}<\sx{N} = \overline{x}$, denotes a partition of interval $[\underline{x},\overline{x}]$, 
\item[2.] $|P| = N$ denotes the size of the partition,
\item[3.]  $\mathcal{P}_{[\underline{x},\overline{x}]}$ is the set of all partitions of interval $[\underline{x},\overline{x}]$.
\end{enumerate}
In particular, the value in Eq. \eqref{eq:TV} is called the \textit{total variation} of function $f$.
\end{defn}

\begin{thm}\label{thm:JordanDecomp} (Jordan Decomposition \cite{jordan1881serie}) Every function $f$ of bounded variation can be written as the sum of a monotonically increasing function $\fplus$ and a monotonically decreasing function $\fminus$, 
where the
functions $\fplus, \fminus :[\underline{x},\overline{x}]\rightarrow \mathbb{R}$ are defined by: 
\begin{align}
\fplus(x) & = \sup_{P\in\mathcal{P}_{[\underline{x},x]}} \sum_{i=0}^{|P|-1} \left\vert f\big(\sx{i+1}\big)-f\big(\sx{i}\big)\right\vert, \label{eq:fplus} \\
\fminus(x) & = f(x) - \fplus(x).
\label{eq:fminus}
\end{align}
\end{thm}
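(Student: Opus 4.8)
The plan is to verify directly that the two functions defined in \eqref{eq:fplus} and \eqref{eq:fminus} have the claimed monotonicity, since their sum equals $f$ by the very definition of $\fminus$ in \eqref{eq:fminus}. First I would record that $\fplus$ is well defined and finite: for any $x\in[\underline{x},\overline{x}]$, every partition of $[\underline{x},x]$ extends to a partition of $[\underline{x},\overline{x}]$ by appending the endpoint $\overline{x}$, an operation that only adds the nonnegative term $|f(\overline{x})-f(x)|$ to its variation sum, so $0\le \fplus(x)\le \fplus(\overline{x})$, and $\fplus(\overline{x})$ is precisely the total variation of $f$, which is finite by the bounded variation hypothesis \eqref{eq:TV}. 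Hence $\fminus(x)=f(x)-\fplus(x)$ is also finite.

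The heart of the argument is a single monotonicity-of-variation estimate. Fix $\underline{x}\le x_1<x_2\le\overline{x}$. For an arbitrary partition $P=\{\sx{0},\dots,\sx{N}\}$ of $[\underline{x},x_1]$ with variation sum $S(P):=\sum_{i=0}^{N-1}|f(\sx{i+1})-f(\sx{i})|$, the refined partition $P\cup\{x_2\}$ is a partition of $[\underline{x},x_2]$ whose variation sum is exactly $S(P)+|f(x_2)-f(x_1)|$. Since $\fplus(x_2)$ dominates the variation sum of every partition of $[\underline{x},x_2]$, in particular of $P\cup\{x_2\}$, and this holds for every choice of $P$, I may take the supremum over all $P$ on the right to obtain
\begin{align}
\fplus(x_2)\ \ge\ \fplus(x_1)+|f(x_2)-f(x_1)|.
\label{eq:extension}
\end{align}

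From \eqref{eq:extension} both claims follow immediately. Since $|f(x_2)-f(x_1)|\ge 0$, we get $\fplus(x_2)\ge \fplus(x_1)$ whenever $x_1<x_2$, so $\fplus$ is monotonically increasing. Since $|f(x_2)-f(x_1)|\ge f(x_2)-f(x_1)$, inequality \eqref{eq:extension} rearranges to $\fplus(x_2)-\fplus(x_1)\ge f(x_2)-f(x_1)$, i.e. $f(x_1)-\fplus(x_1)\ge f(x_2)-\fplus(x_2)$, which is exactly $\fminus(x_1)\ge\fminus(x_2)$; hence $\fminus$ is monotonically decreasing. Together with $\fplus+\fminus=f$ from \eqref{eq:fminus}, this establishes the decomposition.

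The only subtle point is the passage from the per-partition inequality to the supremum inequality \eqref{eq:extension}: one must keep $\fplus(x_2)$ fixed on the left as an upper bound for each extended partition $P\cup\{x_2\}$, and only then pass to the supremum over $P$ on the right. This is a routine manipulation of suprema rather than a genuine obstacle, so I do not expect any real difficulty; notably, the argument extracts both monotonicity statements from the single estimate \eqref{eq:extension} and avoids needing the full additivity identity for total variation over adjacent subintervals.
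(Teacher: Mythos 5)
Your proof is correct and is essentially the standard argument that the paper itself defers to (it gives no proof, citing \cite{royden1968real}): define $\fplus$ as the variation function and derive both monotonicity claims from the superadditivity estimate $\fplus(x_2)\ge \fplus(x_1)+|f(x_2)-f(x_1)|$. The key inequality, the passage to the supremum, and the finiteness check are all handled correctly, so nothing further is needed.
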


The proof of Theorem \ref{thm:JordanDecomp} is standard and can be found in \cite{royden1968real}. 
Clearly, Jordan decomposition can be used to construct decomposition functions. We state this result with the following corollary.
\begin{coro}\label{coro:BVs} A real scalar function $f:[\underline{x}, \overline{x}]\rightarrow \mathbb{R}$ is mixed monotone if it is of bounded variation. In particular, a decomposition function is $g(x,y) = \fplus(x) + \fminus(y)$.
\end{coro}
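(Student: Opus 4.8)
The plan is to verify directly that the proposed $g(x,y) = \fplus(x) + \fminus(y)$ satisfies the three defining conditions of a decomposition function in Definition \ref{def:MixedMonotonicity}, drawing entirely on the structure supplied by the Jordan decomposition in Theorem \ref{thm:JordanDecomp}. Since $f$ is of bounded variation, Theorem \ref{thm:JordanDecomp} applies and yields $f = \fplus + \fminus$ with $\fplus$ monotonically increasing and $\fminus$ monotonically decreasing. These three facts—additivity of the decomposition and the monotonicity of each part—are precisely what the three clauses of mixed monotonicity will consume.

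First I would check the diagonal embedding condition by evaluating $g$ on the diagonal: $g(x,x) = \fplus(x) + \fminus(x)$, which equals $f(x)$ by the decomposition $f = \fplus + \fminus$, equivalently by the definition \eqref{eq:fminus} of $\fminus$. Next, for monotonicity in the first argument, I would take $x_1 \geq x_2$ and observe that the second-argument terms cancel, leaving $g(x_1,y) - g(x_2,y) = \fplus(x_1) - \fplus(x_2) \geq 0$ since $\fplus$ is increasing; this is condition 2. Finally, for the second argument, taking $y_1 \geq y_2$ gives $g(x,y_1) - g(x,y_2) = \fminus(y_1) - \fminus(y_2) \leq 0$ since $\fminus$ is decreasing, which is condition 3. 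With all three conditions verified, $g$ is a decomposition function and $f$ is mixed monotone by Definition \ref{def:MixedMonotonicity}.

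There is essentially no analytical obstacle here: the entire content of the corollary is carried by the Jordan decomposition theorem, and what remains is the bookkeeping observation that the three clauses of mixed monotonicity decouple cleanly into additivity on the diagonal, monotonicity of the increasing part, and monotonicity of the decreasing part. The only point worth stating carefully is that the orders on both domain and codomain are the ordinary scalar order on $\mathbb{R}$, so that ``increasing'' and ``decreasing'' in the Jordan sense coincide exactly with the first- and second-argument monotonicity clauses of Definition \ref{def:MixedMonotonicity}.
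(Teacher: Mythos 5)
Your proof is correct and matches the paper's reasoning exactly: the paper treats the corollary as an immediate consequence of Theorem \ref{thm:JordanDecomp} (``Clearly, Jordan decomposition can be used to construct decomposition functions''), and your verification of the three conditions of Definition \ref{def:MixedMonotonicity} for $g(x,y) = \fplus(x) + \fminus(y)$ is precisely the bookkeeping the paper leaves implicit.
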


We now relate Theorem \ref{thm:ConstrucDF} and Corollary \ref{coro:BVs} for continuously differentiable functions, which are hence also of bounded variation. 
\begin{thm}\label{thm:decompBV} 
Let $f:[\underline{x},\overline{x}]\rightarrow \mathbb{R}$ be continuously differentiable. Then a decomposition function is given by $g(x,y) = f(x) + 2\int_{y}^x |f'(t)|\mathbbm{1}_{f' < 0}(t) \,\,\text{d}t = f(x) + 2 | f'(\eta_{x,y})|\cdot (x-y)$, for some $\eta_{x,y}$ in $[x,y] \cap \{ t: f'(t) < 0 \}$. Here, $\mathbbm{1}_{f' < 0}$ is the indicator function defined as $\mathbbm{1}_{f' < 0}(t) = 1$ if $f'(t) < 0$ and $\mathbbm{1}_{f' < 0}(t) = 0$ if $f'(t) \geq 0$.
\end{thm}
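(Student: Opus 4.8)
The plan is to verify directly that $g$ satisfies the three defining properties of a decomposition function in Definition \ref{def:MixedMonotonicity}, and then to read off the second (pointwise) expression from the mean value theorem for integrals. The single observation that makes everything work is that the integrand can be rewritten as the negative part of $f'$,
\[
|f'(t)|\,\mathbbm{1}_{f'<0}(t) = \max\{-f'(t),0\},
\]
which, since $f'$ is continuous, is itself a continuous function of $t$ (at points where $f'(t)=0$ the factor $|f'(t)|$ vanishes, so there is no jump). Continuity of the integrand is precisely what licenses differentiating $g$ through the fundamental theorem of calculus and, later, invoking the integral mean value theorem.

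First I would check the three conditions. The diagonal condition is immediate: $g(x,x)=f(x)$ since $\int_x^x(\cdot)\,\mathrm{d}t=0$. For monotonicity in the first argument I would differentiate in $x$, obtaining
\[
\frac{\partial g}{\partial x}(x,y) = f'(x) + 2\max\{-f'(x),0\} = |f'(x)| \ge 0,
\]
using the elementary identity $a+2\max\{-a,0\}=|a|$; hence $g(\cdot,y)$ is nondecreasing. For monotonicity in the second argument I would differentiate in $y$, where the lower limit contributes a minus sign,
\[
\frac{\partial g}{\partial y}(x,y) = -2\max\{-f'(y),0\} \le 0,
\]
so $g(x,\cdot)$ is nonincreasing. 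Together these give properties 1--3 of Definition \ref{def:MixedMonotonicity}, so $g$ is a decomposition function and $f$ is mixed monotone.

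It remains to justify the pointwise expression on the right. Since the integrand $\max\{-f'(t),0\}$ is continuous on the closed interval with endpoints $x$ and $y$, the mean value theorem for integrals yields a point $\eta_{x,y}$ between $x$ and $y$ with $\int_y^x \max\{-f'(t),0\}\,\mathrm{d}t = \max\{-f'(\eta_{x,y}),0\}\,(x-y)$; the oriented integral and the factor $x-y$ both change sign under swapping $x$ and $y$, so the identity is valid whether $x>y$ or $x<y$. When this common value is nonzero the integrand is strictly positive at $\eta_{x,y}$, i.e.\ $f'(\eta_{x,y})<0$, so $\max\{-f'(\eta_{x,y}),0\}=|f'(\eta_{x,y})|$ and $\eta_{x,y}\in[x,y]\cap\{t:f'(t)<0\}$, which gives the claimed second equality.

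I do not expect a serious obstacle; the argument is essentially differentiation-and-sign bookkeeping. The one point needing care is the continuity of $\max\{-f',0\}$ (without it neither the differentiation of $g$ nor the integral mean value theorem is justified), together with the degenerate case in which $f'\ge 0$ throughout the interval between $x$ and $y$: there the set $\{t:f'(t)<0\}$ is empty, but the integral vanishes and both expressions collapse to $f(x)$, so the equality holds trivially. Finally, as a consistency check relating this to Corollary \ref{coro:BVs}, I would note that for a continuously differentiable $f$ one has $\fplus(x)=\int_{\underline{x}}^x |f'(t)|\,\mathrm{d}t$, so the Jordan-type decomposition function $\fplus(x)+\fminus(y)$ equals $f(y)+\int_y^x|f'(t)|\,\mathrm{d}t$; substituting $|f'|=f'+2\max\{-f',0\}$ and integrating $f'$ recovers exactly the $g$ above, confirming agreement with the bounded-variation construction.
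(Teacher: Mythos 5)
Your proof is correct, but it takes a genuinely different route from the paper's. The paper \emph{derives} the formula from the bounded-variation machinery of the preceding subsection: it computes the Jordan decomposition of a $C^1$ function explicitly, writing $\fplus(x) = \int_{\underline{x}}^x |f'(t)|\,\text{d}t = f(x) - f(\underline{x}) + \Delta(x)$ with $\Delta(x) = 2\int_{\underline{x}}^x |f'(t)|\,\mathbbm{1}_{f'<0}(t)\,\text{d}t$, and then invokes Corollary \ref{coro:BVs} to conclude that $g(x,y) = \fplus(x) + \fminus(y) = f(x) + \Delta(x) - \Delta(y)$ is a decomposition function, with monotonicity of the two pieces coming for free from Theorem \ref{thm:JordanDecomp}. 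You instead verify the three axioms of Definition \ref{def:MixedMonotonicity} directly by differentiation, using the identity $f'(x) + 2\max\{-f'(x),0\} = |f'(x)| \geq 0$ for monotonicity in the first argument and $\partial g/\partial y = -2\max\{-f'(y),0\} \leq 0$ for the second. Your argument is self-contained and more elementary---it needs neither Corollary \ref{coro:BVs} nor the Jordan decomposition (indeed, your closing ``consistency check'' is essentially the paper's entire proof)---at the cost of not displaying the link to total variation that motivates the section. You are also more careful than the paper on two points: you make explicit that continuity of $\max\{-f',0\}$ is what licenses both the differentiation under the integral sign and the integral mean value theorem, and you address the degenerate case $f'\geq 0$ throughout the interval, where $[x,y]\cap\{t: f'(t)<0\}$ is empty and the second expression must be read as collapsing trivially to $f(x)$---a case the paper's proof (and, strictly speaking, the theorem statement itself) silently skips.
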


\begin{proof}
For a differentiable function, the total variation can be written as
\begin{align}
f^+(x) & := \!\! \int_{\underline{x}}^x \!\!| f'(t) | \,\,\text{d}t = \!\! \int_{\underline{x}}^x\!\!\! | f'(t)| - f'(t) + f'(t)\,\,\text{d}t \\
& =  \!\!\int_{\underline{x}}^x \!\!\!|f'(t)| - f'(t) \,\,\text{d}t + f(x) - f(\underline{x}) \\
&= f(x) - f(\underline{x}) + \Delta(x),
\end{align}
where
\begin{equation}
\Delta(x) = \!\! \int_{\underline{x}}^x \!\!\!| f'(t)| - f'(t) \,\,\text{d}t = 2\int_{\underline{x}}^x  \!\!\! | f'(t)| \mathbbm{1}_{f' < 0}(t) \,\,\text{d}t.
\end{equation}
Clearly $f^+(x)$ is monotonically increasing by definition.  We also have
\begin{align}
f^-(x) & : = f(x) - f^+(x) = f(\underline{x}) - \Delta(x)
\end{align}
is monotonically decreasing. This gives 
\begin{align}
g(x,y) &= f^+(x) + f^-(y) \\
&= f(x) + \Delta(x) - \Delta(y) \\
&= f(x) + 2\int_{y}^x \!\!\! |f'(t)| \mathbbm{1}_{f' < 0}(t) \,\,\text{d}t,
\end{align}
which proves the first equality in the statement. Lastly, since we assumed $f$ to be continuously differentiable, the second equality follows by applying the mean value theorem to the integral in the first equality.
\end{proof}

%\begin{rem}
The second equality in Theorem \ref{thm:decompBV} is of a form similar to the decomposition function in Theorem \ref{thm:ConstrucDF} and the constant in front of the linear term can be larger or smaller than the corresponding constant in Theorem \ref{thm:ConstrucDF}, depending on the function $f $ and the points $x,y$, as shown in the following two examples.
%\end{rem}

\begin{Example}
For $f: [\underline{x},\overline{x}]\rightarrow \mathbb{R}$ defined by $f(x) = -x$, the result in Theorem \ref{thm:decompBV} gives the decomposition function $g(x,y) = x - 2y$, whereas Theorem \ref{thm:ConstrucDF} results in $g(x,y) = -y$. When these decomposition functions are inserted into Proposition \ref{prop:OptByMM}, they result in the bounds
\begin{align}
\underline{x} - 2\overline{x} \leq &-x \leq \overline{x} - 2 \underline{x} \\
-\overline{x} \leq &-x \leq -\underline{x},
\end{align}
respectively. This is therefore an example of when Theorem \ref{thm:ConstrucDF} produces a tighter bound than Theorem \ref{thm:decompBV}.
\end{Example}

\begin{Example}Let $f: [-1,1]\rightarrow \mathbb{R}$ be defined by $f(x) = x^2$. The decomposition function given by Theorem \ref{thm:decompBV} is $g(x,y) = x^2 - 2(\min\{x,0\})^2 + 2(\min\{y,0\})^2$, which leads to $-1 \leq x^2 \leq 3$. One the other hand, Theorem \ref{thm:ConstrucDF} gives $g(x,y) = x^2 + 2x - 2y$, which leads to $-3 \leq x^2 \leq 5$.
In this example,  Theorem  \ref{thm:decompBV} produces a tighter bound than Theorem  \ref{thm:ConstrucDF}.
\end{Example}

Next, several remarks are given in regard to the above results.  
First, all the results in this subsection so far are developed for univariate scalar functions. 
To extend Corollary \ref{coro:BVs} to multivariate, vector-valued functions, one needs a notion of bounded variation for multivariate functions. There are several different ways of defining the total variation of a multivariate function.
Under the definition of total variation in \cite{lenze1990constructive}, a Jordan decomposition can be found for multivariate, scalar-valued functions of bounded variation. 
A decomposition function for multivariate vector-valued functions of bounded variation can then be constructed element-wise whenever the order on the image space is induced by the positive orthant. The bounded variation argument can be pushed to include also functions with unbounded domains as shown in the Appendix.

Secondly, note that the converse of Theorem \ref{thm:JordanDecomp} is also true, i.e., every function having a Jordan decomposition
must be of bounded variation. 
This is not saying that every univariate scalar mixed monotone function must be of bounded variation.  
The reason is because there is a loss of generality in requiring the decomposition function to have a specific form, i.e., the \textit{sum} of a increasing function and a decreasing function.
 
In the context of dynamical systems, these results suggest that when the vector field $f$ is of bounded variation\footnote{Assuming $f$ also satisfies conditions on existence and uniqueness of (Carath\'eodory) solutions of the corresponding differential equation (see, e.g., \cite{cortes2008discontinuous} for such conditions) so that the flow map is uniquely defined.}, the system is mixed monotone. Given that functions that are not of bounded variation are rare, this indicates that mixed monotonicity is a quite generic property. 

Finally, the usefulness of this theoretical result can be sometimes limited when applied for computation (e.g., of reachable sets).
In order to approximate the function values using Proposition \ref{prop:OptByMM}, it requires that the decomposition function can be evaluated easily. 
However, the decomposition function $g(x,y) = \fplus(x) + \fminus(y)$ is hard to compute in general. 
Another drawback is that the approximation given by evaluating this decomposition function can be conservative. 
In fact,  the obtained upper and lower bounds using Proposition \ref{prop:OptByMM} is the function value at some point plus the upper and lower bound of the total variation \cite{royden1968real}. 
These considerations to some extent suggest that the bounded variation based sufficient condition for mixed monotonicity may be too general to be useful for computation. As Mac Lane pointed out, ``good general theory does not search for the maximum generality, but for the right generality.''

% \section{Examples}
% (To be added)

\section{Conclusion}
% {\color{red} Needs to be updated to reflect the bounded variation result} 

In this paper we studied mixed monotone functions and systems. The relation between different definitions of mixed monotone systems in the literature were clarified, and two new sufficient conditions for mixed monotonicity were derived. 
Our results suggest that mixed monotonicity is a relatively generic property. 
While the new sufficient conditions generalize an earlier sufficient condition based on sign-stability of partial derivatives of the vector field,  
the approximation technique by decomposition function can be conservative when applied to the systems satisfying the new conditions. 
Hence, finding better cones and better decompositions that would lead to tighter approximations for a mixed monotone function is still of interest.

%\begin{}
\section*{Appendix}\label{sec:app}

The following result allows us to extend the bounded variation based arguments to functions with unbounded domains.

\begin{prop}
A function $f: \mathbb{R} \rightarrow \mathbb{R}^m$ is mixed monotone with respect to the order induced by the positive orthant if $f\big|_I$ is of bounded variation for any closed interval $I\subseteq \mathbb{R}$, where $f\big|_I$ denotes the restriction of $f$ to the interval $I$.
\end{prop}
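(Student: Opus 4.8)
The plan is to adapt the Jordan-decomposition construction behind Corollary~\ref{coro:BVs} to the unbounded domain by accumulating total variation from a fixed reference point. Two issues separate this from the bounded-interval setting of Theorem~\ref{thm:JordanDecomp}: there is no canonical left endpoint $\underline{x}$ at which to start, and the hypothesis only furnishes finite variation on each bounded closed interval, not on all of $\mathbb{R}$ (for instance $f(x)=x$ has infinite variation globally yet finite variation on every $[a,b]$), so $f^+$ as defined in \eqref{eq:fplus} with $\underline{x}=-\infty$ would be infinite. I would sidestep both issues by anchoring at $0$ and using a \emph{signed} variation.

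Because the order on $\mathbb{R}^m$ is the one induced by the positive orthant, I would verify monotonicity componentwise and assemble $g=(g_1,\dots,g_m)$ at the end, so fix a component $f_i:\mathbb{R}\to\mathbb{R}$. Writing $V_a^b(f_i)$ for the total variation of $f_i$ on $[a,b]$, which is finite by hypothesis, define
\begin{align}
V_i(x) = \begin{cases} V_0^x(f_i), & x\geq 0,\\ -\,V_x^0(f_i), & x<0, \end{cases}
\end{align}
and set $f_i^+ := V_i$ and $f_i^- := f_i - V_i$, in analogy with \eqref{eq:fplus}--\eqref{eq:fminus}. Finiteness of $V_i(x)$ for every $x$ is exactly what the ``every bounded closed interval'' hypothesis buys us.

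The heart of the argument is to show $f_i^+$ is increasing and $f_i^-$ decreasing on all of $\mathbb{R}$. For any $x_1<x_2$, additivity of total variation over adjacent intervals gives $V_i(x_2)-V_i(x_1)=V_{x_1}^{x_2}(f_i)\geq 0$; this single identity covers the three cases $0\le x_1<x_2$, $x_1<x_2\le 0$, and $x_1<0\le x_2$, the last being the one where additivity across the anchor is essential, and shows $f_i^+$ is increasing. For $f_i^-$ I would invoke $V_{x_1}^{x_2}(f_i)\geq f_i(x_2)-f_i(x_1)$ (take the trivial partition), so that $f_i^-(x_2)-f_i^-(x_1)=\big(f_i(x_2)-f_i(x_1)\big)-V_{x_1}^{x_2}(f_i)\leq 0$, giving monotone decrease.

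Finally, putting $g_i(x,y)=f_i^+(x)+f_i^-(y)$, the three clauses of Definition~\ref{def:MixedMonotonicity} follow: $g_i(x,x)=f_i^+(x)+f_i^-(x)=f_i(x)$; increase in the first argument from monotonicity of $f_i^+$; and decrease in the second from that of $f_i^-$. Collecting the $g_i$ yields a decomposition function for $f$ under the positive-orthant order, so $f$ is mixed monotone. I expect the only genuinely nontrivial point to be the behavior at the anchor $0$, resolved by additivity of total variation over adjacent intervals; the remainder is a transcription of Corollary~\ref{coro:BVs} to the shifted, signed variation.
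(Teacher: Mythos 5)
Your proof is correct, and while it shares the paper's overall strategy (reduce to scalar components, anchor a Jordan-type decomposition at the origin), the construction and the gluing argument are genuinely different. The paper first normalizes $f(0)=0$ and defines the two monotone parts \emph{piecewise}: on $[0,\infty)$ the increasing part is the variation $V_0^x$ and the decreasing part is the remainder, while on $(-\infty,0]$ the roles are swapped (the decreasing part is $V_y^0$, the increasing part is $f(x)-V_x^0$); global monotonicity is then recovered by a patching argument at $0$ using the sign bounds $g_1(x_-)\leq 0\leq g_1(x_+)$ and $g_2(y_-)\geq 0\geq g_2(y_+)$, which is precisely where the normalization $f(0)=0$ is needed. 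You instead define a single \emph{signed} variation $V_i$ (equal to $V_0^x$ for $x\geq 0$ and to $-V_x^0$ for $x<0$) and take $f_i^+=V_i$, $f_i^-=f_i-V_i$ on all of $\mathbb{R}$, so the entire verification collapses to the additivity identity $V_i(x_2)-V_i(x_1)=V_{x_1}^{x_2}(f_i)$ plus the elementary bound $V_{x_1}^{x_2}(f_i)\geq f_i(x_2)-f_i(x_1)$. This buys a uniform formula with no normalization and no piecewise definition of the decomposition itself (the case analysis is confined to checking additivity across the anchor); the resulting decomposition function actually differs from the paper's on $(-\infty,0]$ — e.g.\ for $f(x)=-x$ your increasing part is $x$ there while the paper's is identically $0$ — but both are valid. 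What the paper's version buys in exchange is that it is a more literal invocation of Theorem~\ref{thm:JordanDecomp} on each half-line, with the monotone parts being honest (unsigned) variations.
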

\begin{proof}
Note that $f$ is mixed monotone with respect to the order induced by the positive orthant of $\mathbb{R}^m$ if and only if each coordinate $f_i$ is, so we can without loss of generality assume that $m=1$. Moreover, adding or subtracting a constant to $f$ will not alter whether or not it is mixed monotone. We therefore further assume that $f(0) = 0$.

Assume now that $f\big|_I$ is of bounded variation for any closed interval $I$. We construct a decomposition function $g(x,y) = g_1(x) + g_2(y)$ by defining $g_1, g_2$ separately on the two intervals $(-\infty, 0]$ and $[0, \infty)$, respectively.

For $x,y \geq 0$, define $g_1(x)$ to be the total variation of $f$ on the interval $[0,x]$, and $g_2(y) = f(y) - g_1(y)$. By Theorem \ref{thm:JordanDecomp}, $g_1(x)$ is monotonically increasing for $x \geq 0$ and $g_2(y)$ is monotonically decreasing for $y \geq 0$. We also have $g_1(0) = 0$, $g_2(0) = f(0) = 0$.

Next, for $x,y \leq 0$, define $g_2(y)$ to be the total variation of $f$ on the interval $[y,0]$, and $g_1(x) = f(x) - g_2(x)$. This implies that $g_2(y)$ is monotonically decreasing for $y \leq 0$ and $g_1(x)$ is monotonically increasing for $x \leq 0$. We also have $g_2(0) = 0$, $g_1(0) = f(0) = 0$, so the definitions of $g_1, g_2$ on the two intervals coincide at the intersection point $x, y =0$. This implies that, for any choice of pairs $x_-, y_- \leq 0$, and $x_+, y_+ \geq 0$, we have
\begin{align}
g_1(x_-) &\leq 0 \leq g_1(x_+), \\
g_2(y_-) &\geq 0 \geq g_2(y_+),
\end{align}
i.e. $g_1(x)$ is monotonically increasing for $x$ in all of $\mathbb{R}$ and $g_2(y)$ is monotonically decreasing for $y$ in all of $\mathbb{R}$. By construction, we also have $g(x,x) = g_1(x) + g_2(x) = f(x)$ for $x$ in either of the two intervals $(-\infty, 0]$ and $[0, \infty)$. It therefore follows that $g(x,y) = g_1(x) + g_2(y)$ is a decomposition function of $f$, so $f$ is mixed monotone on $\mathbb{R}$.
\end{proof}

Note that this implies that e.g. $f(x) = x\text{sin}(x)$ is mixed monotone on $\mathbb{R}$, even though its derivative becomes unbounded as $|x| \rightarrow \infty$. This is a situation that is not covered by the assumptions of Theorem \ref{thm:ConstrucDF}. 

\bibliographystyle{abbrv}
\bibliography{IEEEabrv,switching,monotone}

\end{document}